\DeclareFontFamily{OT1}{pzc}{}
\DeclareFontShape{OT1}{pzc}{m}{it}%
             {<-> s * [0.900] pzcmi7t}{}
\DeclareMathAlphabet{\mathscr}{OT1}{pzc}%
                                 {m}{it}
\newcommand\be{\begin{equation}}
\newcommand\ee{\end{equation}}
\newcommand\bea{\begin{eqnarray}}
\newcommand\eea{\end{eqnarray}}
\newcommand\bi{\begin{itemize}}
\newcommand\ei{\end{itemize}}
\newcommand\ben{\begin{enumerate}}
\newcommand\een{\end{enumerate}}
\newtheorem{thm}{Theorem}[section]
\newtheorem{lem}[thm]{Lemma}
\newtheorem{prop}[thm]{Proposition}
\newtheorem{rek}[thm]{Remark}
\numberwithin{subsubsection}{subsection}
\begin{document}

\title*{Most Subsets are Balanced in Finite Groups}

\author{Steven J. Miller and Kevin Vissuet}
\institute{Department of Mathematics and Statistics, Williams College, sjm1@williams.edu (Steven.Miller.MC.96@aya.yale.edu) and Department of Mathematics, UCSD, kvissuet@ucsd.edu. The first named author was partially supported by NSF Grant DMS0970067, and the second named author was partially supported by NSF Grant DMS0850577. We thank the participants of the 2012 SMALL REU program, especially Ginny Hogan and Nicholas Triantafillou, as well as Kevin O'Bryant, for helpful discussions.}

%
%
\maketitle

\abstract{The sumset is one of the most basic and central objects in additive number theory. Many of the most important problems (such as Goldbach's conjecture and Fermat's Last theorem) can be formulated in terms of the sumset $S + S = \{x+y : x,y\in S\}$ of a set of integers $S$. A finite set of integers $A$ is sum-dominant if $|A+A| > |A-A|$. Though it was believed that the percentage of subsets of $\{0,\dots,n\}$ that are sum-dominant tends to zero, in 2006 Martin and O'Bryant proved a very small positive percentage are sum-dominant if the sets are chosen uniformly at random (through work of Zhao we know this percentage is approximately $4.5 \cdot 10^{-4}$). While most sets are difference-dominant in the integer case, this is not the case when we take subsets of many finite groups. We show that if we take subsets of larger and larger finite groups uniformly at random, then not only does the probability of a set being sum-dominant tend to zero but the probability that $|A+A|=|A-A|$ tends to one, and hence a typical set is balanced in this case. The cause of this marked difference in behavior is that subsets of $\{0, \dots, n\}$ have a fringe, whereas finite groups do not. We end with a detailed analysis of dihedral groups, where the results are in striking contrast to what occurs for subsets of integers.
\\ \ \\ Keywords: More Sum Than Difference sets, sum-dominant and difference-dominant sets, MSTD, sumsets, finite Abelian groups, dihedral group. \\ \ \\ MSC 2010: 11B13, 11P99 (primary), 05B10, 11K99 (secondary).}




\section{Introduction}

Given a subset $S$ of a group $G$, we define its sumset $S+S$ and difference set $S-S$ by \bea S + S & \ = \ & \{a_i + a_j: a_i, a_j \in A\} \nonumber\\ S - S & \ = \  & \{a_i - a_j: a_i, a_j \in A\}, \eea and let $|X|$ denote the cardinality of $X$. Notice that we're writing the group action as addition, but are not assuming commutativity. If we were to write the action multiplicatively we would still call these the sumset and the difference set, instead of the product and quotient sets, to match the language from earlier work which studied subsets of the integers.

If $|S+S| > |S-S|$ then $S$ is sum-dominant or an MSTD (more sums than differences) set, while if $|S+S| = |S-S|$ we say $S$ is balanced and if $|S+S| < |S-S|$ then $S$ is difference-dominant. If we let the group $G$ be the integers, then we expect that for a `generic' set $S$ we have $|S-S| > |S+S|$. This is because addition is commutative while subtraction is not, since a typical pair $(x,y)$ contributes one sum and two differences.

Though MSTD sets are rare among all finite subsets of integers, they do exist. Examples of MSTD sets go back to the 1960s. Conway is credited with finding $\{0, 2, 3, 4$, $7$, $11$, $12$, $14\}$; for other early examples see also Marica \cite{Ma} and Freiman and Pigarev \cite{FP}. Recently there has been much progress in finding infinite families, either through explicit constructions (see Hegarty \cite{He} and Nathanson \cite{Na1}), and existance arguments via non-constructive methods (see Ruzsa \cite{Ru1, Ru2, Ru3} and Miller-Orosz-Scheinerman \cite{MOS}). The main result in the subject is due to Martin and O'Bryant \cite{MO}, who proved a positive percentage of subsets of $\{0, 1, \dots, N\}$ are sum-dominant, though the percentage is small (work of Zhao \cite{Zh2} suggests it is around $4.5 \cdot 10^{-4}$).

Almost all previous research on MSTD sets focused exclusively on subsets of the integers, though recently Zhao \cite{Zh1} extended previous results of Nathanson \cite{Na2}, who showed that MSTD sets of integers can be constructed from MSTD sets in finite abelian groups. Zhao provides asymptotics for the number of MSTD sets in finite abelian groups. An immediate corollary of the main theorem in \cite{Zh2} is that if $\{G_n\}$ is a sequence of finite abelian groups with $\{G_n\}\rightarrow \infty$ then the percentage of MSTD sets is almost surely $0$. In this paper we not only extend this result to difference-dominant sets but to non-abelian finite groups as well.

\begin{thm}\label{thm:unifchosenrandG1}
Let $\{G_n\}$ be a sequence of finite groups, not necessarily abelian, with $|G_n|\rightarrow \infty$. Let $S_n$ be a uniformly chosen random subset of a $G_n$. Then $\mathbb{P}(S_n+S_n=S_n-S_n=G)\rightarrow 1$ as $n\to\infty$. In other words, as the size of the finite group grows almost all subsets are balanced (with sumset and difference set the entire group).
\end{thm}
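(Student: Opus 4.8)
The plan is to show that for a uniformly random subset $S_n \subseteq G_n$, with probability tending to $1$ every element $g \in G_n$ lies in both $S_n + S_n$ and $S_n - S_n$; since trivially $S_n \pm S_n \subseteq G_n$, this forces $S_n + S_n = S_n - S_n = G_n$, which is in particular balanced. So the heart of the matter is a union bound over the $|G_n|$ group elements, controlling for each fixed $g$ the probability that $g$ has no representation as a sum (resp. difference) of two elements of $S_n$.

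**The key estimate**

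Fix $g \in G_n$ and write $N = |G_n|$. I would look for a large collection of disjoint pairs $\{a,b\}$ with $a+b = g$; each such pair, together with the requirement $a,b \in S_n$, gives an independent "chance" to represent $g$. Concretely, partition (most of) $G_n$ into roughly $N/2$ disjoint pairs $\{a, g-a\}$ (with $a \neq g - a$); the involution $a \mapsto g - a$ has at most a bounded number of fixed points in a general group, so we lose only $O(1)$ elements. For each such pair, $\mathbb{P}(a \in S_n \text{ and } g-a\in S_n) = 1/4$, and these events are independent across disjoint pairs since membership of distinct elements in $S_n$ is independent. Hence
\[
\mathbb{P}(g \notin S_n + S_n) \ \le\ \left(\tfrac{3}{4}\right)^{\lfloor (N-O(1))/2\rfloor} \ =\ O\!\left(c^{N}\right)
\]
for some constant $c < 1$. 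The same argument, using the pairing $a \mapsto a - g$ (equivalently $\{a, a-g\}$), handles $g \notin S_n - S_n$. Summing over all $g \in G_n$ gives
\[
\mathbb{P}\big(S_n + S_n \ne G_n \ \text{ or } \ S_n - S_n \ne G_n\big) \ \le\ 2N \cdot O(c^N) \ \longrightarrow\ 0,
\]
which is exactly the claim.

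**Anticipated obstacles and refinements**

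The one genuinely delicate point is the non-abelian bookkeeping: writing the operation additively is only notation, so "$a+b=g$" and "$a-b=g$" must be read as $ab=g$ and $ab^{-1}=g$, and one should check that the pairing used to extract independent trials is genuinely an involution with few fixed points. For sums, solving $ab=g$ and $ba=g$ simultaneously is not what we want; rather, fix the pairing $\{a, a^{-1}g\}$ of the left-cosets-style decomposition — the map $a \mapsto a^{-1} g$ is an involution on $G_n$, with fixed points exactly the solutions of $a^2 = g$, of which there are at most $|G_n[2]| \le$ (number of square roots of $g$), and in any case this is $o(N)$, which is all we need. Similarly $a \mapsto ag$ — no wait, for differences $ab^{-1}=g$ we want the involution $a \mapsto ga^{-1}$, whose fixed points solve $a^2 = g$ again. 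So in both cases the number of discarded elements is at most the number of square roots of $g$, which is negligible relative to $N$; even the crude bound that we keep at least $N/3$ disjoint pairs once $N$ is large suffices. Apart from that, the argument is a routine first-moment / union-bound computation, and the exponential decay in $N$ comfortably beats the factor $2N$.
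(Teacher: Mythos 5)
Your overall strategy (a union bound over $g$, with an exponentially small per-element failure probability coming from many independent two-element trials) is the same as the paper's, but the step you yourself flag as ``the one genuinely delicate point'' is resolved incorrectly, and this is a genuine gap rather than a cosmetic one. The map $\sigma(a)=a^{-1}g$ is \emph{not} an involution on a non-abelian group: $\sigma(\sigma(a))=(a^{-1}g)^{-1}g=g^{-1}ag$, which equals $a$ only when $a$ commutes with $g$. So for non-central $g$ the orbits of $\sigma$ are cycles of various lengths $m\ge 1$, not pairs, and the events ``$a\in S$ and $\sigma(a)\in S$'' for $a$ ranging over a common orbit are not supported on disjoint pairs, so your independence claim does not apply as stated. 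This is precisely why the paper decomposes $G$ into closed chains $X_i$ (the $\sigma$-orbits) and counts the admissible subsets of a length-$m$ cycle by the Lucas number $L(m)$ (Lemma \ref{lem:numberwayscolorngon}), yielding $\mathbb{P}(g\notin S+S)=\prod_i L(|X_i|)/2^{|G|}\le (1.8/2)^{|G|}$. Your argument can be patched --- take alternate edges of each cycle to extract $\lfloor m/2\rfloor$ genuinely disjoint pairs --- but as written it rests on a false structural assertion.

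The second problem is your treatment of the fixed points. A fixed point of $\sigma$ is a solution of $a^2=g$ (in multiplicative notation), and the number of such solutions is not $O(1)$, not $o(N)$, and not even bounded away from $N$: in $(\mathbb{Z}/2\mathbb{Z})^k$ with $g$ the identity \emph{every} element is a fixed point, and in $D_{2n}$ more than half the elements square to the identity. In those cases you retain zero (or very few) disjoint pairs, so your fallback claim that ``we keep at least $N/3$ disjoint pairs once $N$ is large'' is simply false, and the resulting bound $(3/4)^{0}=1$ proves nothing for that $g$. The repair is to observe that a fixed point is a gain, not a loss: $a^2=g$ means that $g\notin S+S$ forces $a\notin S$, contributing a factor of $1/2\le 3/4$ per fixed point (this is the $m=1$ case, $L(1)=1$, of the Lucas count). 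With both repairs in place your argument becomes essentially the paper's proof; without them it does not cover all finite groups, which is the whole point of the theorem.
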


While Theorem \ref{thm:unifchosenrandG1} shows that in the limit almost all subsets of finite groups are balanced, it leaves open the relative behavior of sum-dominant and difference-dominant sets. Though the numbers of such sets are lower order and percentagewise tend to zero, are there more, equal or fewer sum-dominant or difference-dominant sets? For example, Figure \ref{fig:clockgroupinvestigation} shows the result of numerical simulations for 10,000 clock groups $\mathbb{Z}/n\mathbb{Z}$ for $n \in \{10, \dots, 100\}$.

\begin{figure}
\begin{center}
\scalebox{.996525}{\includegraphics{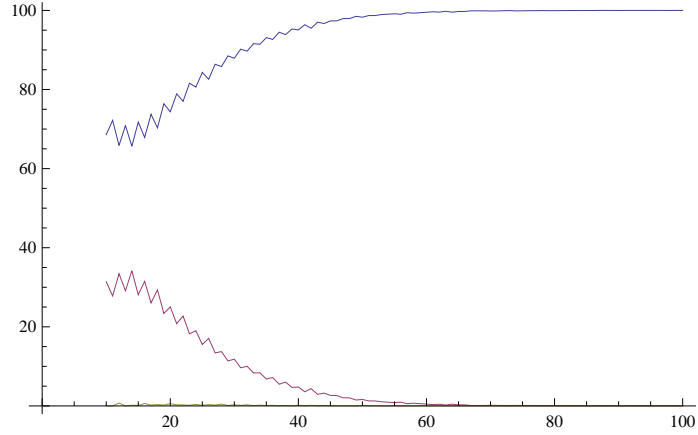}}
\caption{\label{fig:clockgroupinvestigation} Numerical simulations on the number of balanced, difference-dominant and sum-dominant subsets of $\mathbb{Z}/n\mathbb{Z}$ for $n \in \{10, \dots, 100\}$. For each $n$ we uniformly chose 10,000 random subsets of $\{1, \dots, n\}$. Top plot is the percentage of balanced, middle is the percentage of difference-dominant, and bottom is the percentage of sum-dominant.}
\end{center}\end{figure}

In Section \ref{sec:dih} we explore this question for subsets of dihedral groups, and see very different behavior than in the integers. We conjecture that while almost all subsets of the dihedral group are balanced, there are more MSTD sets than there are difference-dominant sets, in sharp contrast to the prevalence of difference-dominant subsets of the integers.

The paper is organized as follows. We first prove our main result for all finite groups in \S\ref{sec:subsetsfinitegroups}. We then explore the MSTD sets of the dihedral group in \S\ref{sec:dihedral}. We end with some concluding remarks and suggestions for future research.

\section{Subsets of Finite Groups}\label{sec:subsetsfinitegroups}

Martin and O'Bryant \cite{MO} showed that although MSTD subsets of the integers are rare, they are a positive percentage of subsets. MSTD sets in finite groups are even rarer. We will prove that as the size of a finite group tends to infinity, the probability that a subset chosen uniformly at random is sum-dominant tends to zero. Somewhat surprisingly, this is also true for difference-dominant sets. This is very different than the integer case, where more than 99.99\% of all subsets are difference-dominant.

The reason the integers behave differently than finite groups is that a subset of the integers contains fringe elements, which we now define. Let $S$ be a subset of $I_n := \{0, 1, \dots, n\}$ chosen uniformly at random. The elements of $S$ near 0 and $n$ are called the fringe elements. Interestingly the notion of nearness is independent of $n$; the reason is that almost all possible elements of $I_n + I_n$ and $I_n - I_n$ are realized respectively by $S+S$ and $S-S$; Martin and O'Bryant \cite{MO} prove that $S+S$ and $S-S$ miss on average 10 and 6 elements, while Lazarev, Miller and O'Bryant \cite{LMO} prove the variance is bounded independent of $n$. Thus whether or not a set is sum-dominant is essentially controlled by the fringe elements of $S$, as the `middle' is filled with probability 1 and the presence and absence of fringe elements control the extremes. In a finite group, there are no fringe elements since each element can be written as $|G|$ different sums and differences, and thus most elements appear in the sumset or difference set with high probability.

In the proof of Theorem \ref{thm:unifchosenrandG1} we reduce certain probabilities to products of Lucas numbers $L(n)$; these satisfy the recurrence $L(n+2) = L(n+1) + L(n)$ with initial conditions $L(0) = 2$ and $L(1) = 1$. Note this is the same recurrence relation as the Fibonacci numbers $F(n)$, who differ from the Lucas numbers in that their initial conditions are $F(0) = 0$ and $F(1) = 1$.

The following lemma is useful, and is in the spirit of calculations from \cite{LMO}. The interpretation will be that the red vertices correspond to elements chosen to be in an $S$, and the condition that no neighboring vertices are both colored red will ensure that certain elements are not represented in $S+S$.

\begin{lem}\label{lem:numberwayscolorngon} Let $C_n= \{a_1, \dots , a_n\}$ denote a closed chain of $n$ elements (so $a_1$ is adjacent to $a_2$ and $a_n$, and so on). If $P(n)$ is the number of ways to color the vertices of $C_n$ red or blue such that no two neighboring vertices are colored red, then $P(n) = L(n)$.
\end{lem}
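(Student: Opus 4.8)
The plan is to count two-colorings of the cycle $C_n$ with no two adjacent red vertices by the standard technique of reducing the cyclic problem to a linear (path) problem, and then recognizing the resulting count as a Lucas number. First I would let $Q(n)$ denote the number of legal colorings of a \emph{path} $a_1, \dots, a_n$ (so only $a_i$ and $a_{i+1}$ are adjacent, with $a_1$ and $a_n$ not adjacent). A one-step transfer-matrix or direct induction argument, splitting on whether $a_n$ is red or blue, gives $Q(n) = Q(n-1) + Q(n-2)$ with $Q(1) = 2$ and $Q(2) = 3$, so $Q(n) = F(n+2)$, the Fibonacci numbers.

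Next I would pass from the path to the cycle by conditioning on the color of a fixed vertex, say $a_1$. If $a_1$ is blue, then removing $a_1$ leaves a path on $a_2, \dots, a_n$ with no further constraint, contributing $Q(n-1)$ colorings. If $a_1$ is red, then $a_2$ and $a_n$ must both be blue, and the remaining vertices $a_3, \dots, a_{n-1}$ form a path with no constraint beyond being a path, contributing $Q(n-3)$ colorings. Hence $P(n) = Q(n-1) + Q(n-3) = F(n+1) + F(n-1)$. The final step is to identify $F(n+1) + F(n-1)$ with $L(n)$: this follows from the well-known identity $L(n) = F(n-1) + F(n+1)$, which one checks by verifying it holds for $n = 1, 2$ (using $F(0) = 0$) and noting both sides satisfy the same two-term recurrence, so they agree for all $n \geq 1$; the small cases $P(1), P(2), P(3)$ can be checked by hand to cover any edge effects in the reduction.

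I do not expect any serious obstacle here; the argument is entirely elementary. The only mild subtlety is bookkeeping for small $n$ (for instance $C_1$ and $C_2$, where a vertex may be adjacent to itself or doubly adjacent to its neighbor), and making sure the ``path with no constraint'' claims about the leftover segments are stated with the correct index ranges so that the recurrences for $Q$ and the reduction formula for $P$ have the right initial conditions. Alternatively, one can bypass the path reduction entirely and use the transfer matrix $M = \begin{pmatrix} 1 & 1 \\ 1 & 0 \end{pmatrix}$, for which $P(n) = \operatorname{tr}(M^n)$; since the eigenvalues of $M$ are the golden ratio $\varphi$ and its conjugate $\psi$, and $L(n) = \varphi^n + \psi^n$, this gives $P(n) = L(n)$ immediately. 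I would present the inductive version as the main line since it keeps the exposition self-contained and matches the combinatorial framing used later for $S+S$.
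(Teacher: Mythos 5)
Your proposal is correct and follows essentially the same route as the paper: reduce the cycle to a path by conditioning on the color of $a_1$ (giving $P(n) = A(n-1) + A(n-3)$ in the paper's notation, your $Q(n-1)+Q(n-3)$), solve the path count via the Fibonacci recurrence with $A(1)=2$, $A(2)=3$, and finish with the identity $L(n) = F(n+1)+F(n-1)$. The transfer-matrix remark is a nice alternative but is not needed.
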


\begin{proof} We derive a recurrence formula for $P(n)$. We may draw $C_n$ as a regular $n$-gon with the $a_i$'s as the vertices. Let $A(n)$ denote the number of ways a line with $n$ vertices $a_1, a_2, \dots, a_n$ can be colored red or blue so that no two neighboring vertices are colored red. We have \be P(n) \ = \ A(n-1) + A(n-3). \ee

To see this, there are two cases. Consider the first vertex, $a_1$. If it is colored blue then we may `break' the chain at $a_1$ and the problem reduces to determining the number of ways to color $n-1$ vertices on a line red or blue so that no two neighboring ones are both red; by definition this is $A(n-1)$. Alternatively, if $a_1$ is colored red then $a_2$ and $a_n$ must both be colored blue, and thus we are left with coloring $n-3$ vertices on a line so that no two consecutive vertices are both red; again, by definition this is just $A(n-3)$.

%

Thus the lemma is reduced to computing $A(n)$, which satisfies the Fibonacci-Lucas recurrence. To see this, consider $n$ vertices on a line, with $A(n)$ the number of ways to color these red and blue so that no neighbors are both colored red. If the first vertex is colored blue, then by definition there are $A(n-1)$ ways to color the remaining vertices, while if the first vertex is colored red then the second must be colored blue, leaving $A(n-2)$ ways to color the remaining vertices. Thus \be A(n)  \ = \ A(n-1) + A(n-2). \ee It is easy to see that $A(1)=2$ and $A(2)=3$, which implies \be A(n)\ =\ F(n+2), \ee where $F(n)$ is the $n$\textsuperscript{th} Fibonacci number. As $P(n) = A(n-1) + A(n-3)$, we find \be P(n) \ = \ F(n+1)+F(n-1). \ee As the $n$\textsuperscript{th} Lucas number satisfies \be L(n) \ = \ F(n+1) + F(n-1) \ee (this can easily be proved directly, or see for example \cite{BQ}), we find $P(n) = L(n)$ as claimed. \hfill $\Box$ \end{proof}

We now prove our main theorem.\\ \

\noindent \emph{Proof of Theorem \ref{thm:unifchosenrandG1}:} We start by showing that the probability a $g \in G = \{g_1, g_2$, $\dots$, $g_n\}$ is in $S+S$ approaches 1 exponentially fast. For $g\in G$, we have
\begin{equation}
\mathbb{P}(g\notin S+S)\ = \ \mathbb{P}(x\notin S \vee y \notin S \quad \forall x,y \in G  \text{ s.t. }  x+y=g).
\end{equation}
To determine the probability that $S+S$ is not all of $G$ we will add the probabilities $\mathbb{P}(g\notin S+S)$ for each $g$. Note these probabilities are not independent, as $x \not\in G$ affects the probability of several $g$ being in $S+S$.

We concentrate on a fixed $g$. If $x\in G$ then there exist a chain of elements $\{x_1,x_2\dots x_n\}$ $=$ $X\subseteq G$ such that
\be\label{eq:keyeqgroups} x+x_1 \ = \ x_2+x_3\ = \ \cdots\ =\ x_{n-1}+ x_n \ = \ x_n+x \ = \ g, \ee; clearly the pairs depend on $g$. Note that $X$ also depends on the choice of $x\in G$. If we denote all distinct chains as $X_1,\dots,X_n$ then these sets partition $G$. If $S$ is a subset of $G$, for $g$ not to be represented in $S+S$ we need at least one element of each pair in each $X_i$ to fail to be in $S$. The number of ways this can happen is $\prod L(|X_i|)$, where $L(n)$ is the $n$\textsuperscript{th} Lucas number.


To see this equality we use a method similar to that used by Lazarev, Miller, and O'Bryant in \cite{LMO}. Counting the number of subsets of $X_i$ such that we never take two adjacent elements is equivalent to counting the number of ways the vertices of a regular polygon with $|X_i|=n$ vertices can be colored with two colors (say red and blue) such that no two adjacent vertices are blue. Note that each subset $S$ of vertices with this property is equivalent to a set where $g \not\in S+S$, and since the $X_i$ partition $G$, then by Lemma \ref{lem:numberwayscolorngon} the number of such colorings is $\prod L(|X_i|)$.
Combining the independence of the $X_i$ with Lemma \ref{lem:numberwayscolorngon}, we conclude,
 \begin{equation}
\mathbb{P}(g\notin S+S)\ = \ \frac{\prod L(|X_i|)}{2^{|G|}}.
\end{equation}
For example, take the element $a+b \in D_{6}=\langle a,b | a+a+a , b+b, a+b+a+b\rangle$, where $D_6$ is the dihedral group with six elements. Here we have that \begin{equation}a+b=(a+b)+(a+a+a)=(a+a+a)+(a+b)\end{equation} and \begin{equation}a+b=(a+a)+(a+a+b)=(a+a+b)+(a)=(a)+(b)=(b)+(a+a),\end{equation} where plus denotes the group operation. The two chains we obtain are $X_1=\{a+b, a+a+a\}$ and $X_2=\{a+a, a+a+b, a, b\}$. Letting $S_{X_1}=S\cap X_1$ and $S_{X_2}=S\cap X_2$ we have that
\bea \mathbb{P}(a+b\notin S+S) & \ = \  & \mathbb{P}(a+b\notin S_{X_1}+S_{X_1})\mathbb{P}(a+b\notin S_{X_2}+ S_{X_2} ) \nonumber\\ &=& \left(\frac{L(2)}{2^2}\right)\left(\frac{L(4)}{2^4}\right), \eea where the latter equality occurs because of Lemma \ref{lem:numberwayscolorngon}.

Note that $L(n)= \phi^n+(-\phi)^{-n}$ where $\phi = \frac{1+\sqrt{5}}2$ is the golden ratio. As the $X_i$'s are disjoint, we obtain for each $g\in G$ that
\begin{equation}
\mathbb{P}(g\notin S+S)\ = \ \frac{\prod L(|X_i|)}{2^{|G|}}\ \le \  \frac{\prod 1.8^{|X_i|}}{2^{|G|}}\ = \ \left(\frac{1.8}{2}\right)^{|G|}.
\end{equation}  As crude bounds suffice, we use the union bound to bound the contribution from each element in $G$, and find
\begin{equation}
\mathbb{P}(|S+S|<|G|)\ = \ \mathbb{P}(\cup_{g\in G} g\notin G) \ \le \  \sum_{g\in S+S} \mathbb{P}(g\notin S+S)\ \le \  |G|\left(\frac{1.8}{2}\right)^{|G|}.
\end{equation}
As the size of the group approaches infinity, $\mathbb{P}(|S+S|<|G|)$ approaches zero. The same argument holds for $S-S$ since there is a one to one bijection between group elements and their inverses. Thus most subsets are balanced. \hfill $\Box$ \ \\

\begin{rek} The above arguments do not apply to subsets of the integers. The reason is due to the lack of a group structure. In particular, the result from equation \eqref{eq:keyeqgroups} does not hold and different elements have different numbers of representations as a sum or a difference. For example, for the integers the number of pairs $(x,y) \subset \{0, \dots, n-1\}^2$ such that $x+y=k$ is a triangular function of $k$, peaking when $k=n-1$. Thus whether or not small (near 0) or large (near $2n-2$) $k$ are in the sumset is controlled by the fringe elements of our set. A similar result holds for differences, and thus if the fringe is carefully chosen then we can force our set to be sum-dominant or difference-dominant. Note such forcing arguments cannot happen with a group structure.
\end{rek}


Note that we used $1.8$ as a very crude bound. While $\prod L(|X_i|)$ is much closer to $\phi^n$ then it is to $1.8^n$, since $\phi^0$ is less than $L(0)$, $\phi^n$ does not provide an inequality for all $n$.


\section{Sum Dominated sets in Dihedral Groups}\label{sec:dihedral}

Although sum-dominant sets and and difference-dominant sets are rare in sufficiently large finite groups, we can compare the size of the number of sum-dominant subsets and difference-dominant subsets in any fixed finite group. In this section we first explore the sumset and difference set of cyclic groups. We then apply those results to give intuition on why in any dihedral group, there should be more sum-dominant sets than difference-dominant sets.

\subsection{Cyclic Group Preliminaries}

Before we look at the dihedral group, we explore two different cases in cyclic groups. In the first case we compute the probability of an element missing in the sumset and difference set. In the second case we compute the probability of missing an element in $A+B$ where $A$ and $B$ are both subsets of $\mathbb{Z}/n\mathbb{Z}$.

\begin{lem}\label{lem:spluss}
Let $S$ be a uniformly chosen random subset of $\mathbb{Z}/n\mathbb{Z}$. Then
\begin{equation}
\mathbb{P}(k \notin S + S) \ = \  O\left((3/4)^{n/2}\right).
\end{equation}
\end{lem}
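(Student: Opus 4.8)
The plan is to mimic the chain-counting argument from the proof of Theorem~\ref{thm:unifchosenrandG1}, but to exploit the fact that $\mathbb{Z}/n\mathbb{Z}$ is abelian, which forces all the relevant chains to have length at most $2$. First I fix $k \in \mathbb{Z}/n\mathbb{Z}$ and consider the map $x \mapsto k - x$ on $\mathbb{Z}/n\mathbb{Z}$. This is an involution, so it partitions $\mathbb{Z}/n\mathbb{Z}$ into its fixed points, which are exactly the solutions of $2x = k$, together with two-element orbits $\{x, k-x\}$ with $x \neq k-x$. In the language of the proof of Theorem~\ref{thm:unifchosenrandG1}, the chains $X_i$ associated to the element $k$ are precisely these orbits: each chain has length $1$ or $2$, the length-$1$ chains being the (at most two) square roots of $k$. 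This is the key structural simplification over the general group case treated in Section~\ref{sec:subsetsfinitegroups}, where chains can be arbitrarily long.

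Next I invoke the identity $\mathbb{P}(k \notin S+S) = \prod_i L(|X_i|)/2^{|G|}$ established in the proof of Theorem~\ref{thm:unifchosenrandG1} via Lemma~\ref{lem:numberwayscolorngon}. Let $\epsilon \in \{0,1,2\}$ be the number of solutions of $2x = k$ in $\mathbb{Z}/n\mathbb{Z}$, so that $\epsilon = 1$ when $n$ is odd, and $\epsilon \in \{0,2\}$ according to the parity of $k$ when $n$ is even. Then there are $\epsilon$ chains of length $1$ and $(n-\epsilon)/2$ chains of length $2$. Since $L(1) = 1$ and $L(2) = 3$, this yields the exact value $\mathbb{P}(k \notin S+S) = 3^{(n-\epsilon)/2}/2^n$.

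Finally I bound this crudely: since $\epsilon \ge 0$, we have $3^{(n-\epsilon)/2} \le 3^{n/2}$, hence $\mathbb{P}(k \notin S+S) \le 3^{n/2}/2^n = (\sqrt{3}/2)^n = (3/4)^{n/2}$, which is the claimed $O((3/4)^{n/2})$ bound (with implied constant $1$). There is no real obstacle here; the only point requiring a little care is the parity case analysis pinning down $\epsilon$, and since only an upper bound is needed this is absorbed harmlessly into the inequality $\epsilon \ge 0$. One could even bypass the case analysis altogether by noting that every chain has length at least $1$ and that $L(\ell) \le 3^{\ell/2}$ for every $\ell \ge 1$, so that $\prod_i L(|X_i|) \le \prod_i 3^{|X_i|/2} = 3^{n/2}$ immediately.
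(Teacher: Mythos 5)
Your proof is correct and is essentially the paper's own argument: the paper likewise partitions $\mathbb{Z}/n\mathbb{Z}$ into the orbits $\{x,k-x\}$ of the involution $x\mapsto k-x$, uses independence across these pairs, and carries out the same parity case analysis, arriving at exactly the three exact values you compute (your $L(1)=1$, $L(2)=3$ factors are just the paper's $1/2$ and $3/4$ per orbit, written in the Lucas-number language of Theorem~\ref{thm:unifchosenrandG1}). The only cosmetic difference is that you route the count through Lemma~\ref{lem:numberwayscolorngon}, whereas the paper computes the per-pair probabilities directly.
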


\begin{proof}
Let $k\in \mathbb{Z}/n\mathbb{Z}$. Since addition is commutative, all sets of pairs of elements that sum to $k$ partition the group. Furthermore, the number of pairs of distinct elements in $\mathbb{Z}/n\mathbb{Z}$ is equal to either $n/2,n/2-1$ or $(n-1)/2$. The number of distinct pairs depends on the parity of $n$ and $k$. From the independence of the pairs of elements that sum to $k$, we have
\begin{equation}
\mathbb{P}(k \notin S+S)\ = \ \prod_{0\leq i\leq \lceil (n+1)/2 \rceil}\mathbb{P}({i \notin S \vee k-i \notin S}).
\end{equation}
Finally, since counting the number of distinct pairs is straightforward, we conclude
\begin{equation}
\mathbb{P}(k \notin S+S) \ = \  \left\{
\begin{array}{c l}
(1/2)^2(3/4)^{n/2-1}& \mbox{ $k$ even and $n$ even } \\
(3/4)^{n/2}& \mbox{ $k$ odd and $n$ even } \\
(1/2)(3/4)^{(n-1)/2}& \mbox{ $n$ odd. }
 \end{array}
\right.
\end{equation}
The factor of $1/2$ is due to the number of elements $x\in \mathbb{Z}/n\mathbb{Z}$ such that $x+x=k$. Again, the number of these elements depends on the parity of $n$ and $k$. \hfill $\Box$
\end{proof}

\begin{lem}\label{lem:s1pluss2}
Let $S_1$ and $S_2$ be independent sets chosen uniformly among subsets of $\mathbb{Z}/n\mathbb{Z}$. Then
\begin{equation}
\mathbb{P}(k \notin S_1 + S_2) \ = \  (3/4)^n.
\end{equation}
\end{lem}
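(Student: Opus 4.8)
The plan is to compute, for a fixed $k \in \mathbb{Z}/n\mathbb{Z}$, the probability that $k$ fails to be represented as a sum $s_1 + s_2$ with $s_1 \in S_1$ and $s_2 \in S_2$, by exploiting the fact that the relevant pairs are now \emph{ordered} and \emph{genuinely independent} across the two sets. For each $i \in \mathbb{Z}/n\mathbb{Z}$ there is exactly one way to write $k = i + (k-i)$ with the first summand coming from $S_1$ and the second from $S_2$, and as $i$ ranges over all of $\mathbb{Z}/n\mathbb{Z}$ we get $n$ such representations. Unlike in Lemma \ref{lem:spluss}, there is no identification between the pair $(i, k-i)$ and the pair $(k-i, i)$ — the former requires $i \in S_1$ and $k-i \in S_2$, while the latter requires $k - i \in S_1$ and $i \in S_2$ — so these $n$ conditions involve disjoint collections of ``slots'' and are independent.

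First I would set up the event: $k \notin S_1 + S_2$ exactly when, for every $i \in \mathbb{Z}/n\mathbb{Z}$, it is not the case that both $i \in S_1$ and $k - i \in S_2$. For a fixed $i$, the event ``$i \in S_1$ and $k - i \in S_2$'' has probability $(1/2)(1/2) = 1/4$, since membership of $i$ in $S_1$ and membership of $k-i$ in $S_2$ are independent fair coin flips (the two sets are chosen independently and uniformly). Hence the complementary event ``not both'' has probability $3/4$ for each individual $i$.

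Next I would argue independence across $i$. The event for index $i$ depends only on the indicator of $i$ in $S_1$ and the indicator of $k-i$ in $S_2$. As $i$ runs over $\mathbb{Z}/n\mathbb{Z}$, the first coordinate $i$ runs over all of $\mathbb{Z}/n\mathbb{Z}$ without repetition, and likewise $k - i$ runs over all of $\mathbb{Z}/n\mathbb{Z}$ without repetition; moreover the $S_1$-slots and the $S_2$-slots are disjoint families of random bits because $S_1$ and $S_2$ are independent. Therefore the $n$ events ``not both $i \in S_1$ and $k-i \in S_2$'', one for each $i$, are mutually independent, and
\begin{equation}
\mathbb{P}(k \notin S_1 + S_2) \ = \ \prod_{i \in \mathbb{Z}/n\mathbb{Z}} \mathbb{P}(\neg(i \in S_1 \wedge k - i \in S_2)) \ = \ (3/4)^n,
\end{equation}
as claimed.

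I do not expect a serious obstacle here; the only subtlety worth stating carefully is precisely the point that distinguishes this lemma from Lemma \ref{lem:spluss}: because $A+B$ with $A \ne B$ uses ordered pairs drawn from independent sources, there is no collapsing of $(i,k-i)$ with $(k-i,i)$ and no exceptional ``diagonal'' term $x + x = k$ to track, so the answer is a clean $(3/4)^n$ with no dependence on the parities of $n$ or $k$. The one thing to be careful about in writing this up is to phrase the independence claim in terms of the disjointness of the underlying random bits rather than loosely in terms of ``the pairs,'' so that the product over all $i$ is justified rigorously.
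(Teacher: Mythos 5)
Your proof is correct and follows essentially the same route as the paper: write $\mathbb{P}(k \notin S_1+S_2)$ as the product over $i$ of $\mathbb{P}(\neg(i\in S_1 \wedge k-i\in S_2))$ and use the mutual independence of these $n$ events to get $(3/4)^n$. Your write-up is in fact more careful than the paper's, since you explicitly justify the independence via the disjointness of the underlying $S_1$- and $S_2$-indicators, a point the paper merely asserts.
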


\begin{proof}
Let $k\in \mathbb{Z}/n\mathbb{Z}$. The claim follows immediately from the fact that
\begin{equation}
\mathbb{P}(k \notin S_1+S_2)\ = \ \prod_{0\leq i\leq n-1}\mathbb{P}({i \notin S_1 \vee k-i \notin S_2})
\end{equation} and the fact that these $n$ products are mutually independent. \hfill $\Box$
\end{proof}

\begin{lem} \label{lem:SminusS}
Let $S$ be a uniformly chosen random subset of $\mathbb{Z}/n\mathbb{Z}$. Then \begin{equation}  \mathbb{P}( k\notin S - S )\ = \ \frac{L(n/d)^d}{2^n}\ = \  O\left((\phi/2)^n\right),  \end{equation} where $\gcd(k,n)=d$, $L(n)$ is the $n$\textsuperscript{{\rm th}} Lucas number, and $\phi$ is the golden ratio.
\end{lem}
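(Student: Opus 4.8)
The plan is to reinterpret the event $\{k\notin S-S\}$ as an independent-set condition on a Cayley graph and then quote Lemma~\ref{lem:numberwayscolorngon}. Since $k\in S-S$ precisely when there is some $x$ with both $x\in S$ and $x+k\in S$, the set $S$ fails to represent $k$ as a difference if and only if $S$ contains no two vertices joined by an edge of the graph $\Gamma_k$ on vertex set $\mathbb{Z}/n\mathbb{Z}$ whose edges are the pairs $\{x,\,x+k\}$; that is, $k\notin S-S$ iff $S$ is an independent set of $\Gamma_k$. This is exactly the structural feature separating differences from sums: for the sumset the relevant pairs $\{x,\,k-x\}$ form a perfect matching (which is why Lemma~\ref{lem:spluss} produced powers of $3/4$), whereas for differences the pairs $\{x,x+k\}$, $\{x+k,x+2k\},\dots$ chain together, so cycles (and hence Lucas numbers) appear.

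Next I would pin down the component structure of $\Gamma_k$. The translation $x\mapsto x+k$ has orbits equal to the cosets of the cyclic subgroup $\langle k\rangle\le\mathbb{Z}/n\mathbb{Z}$, and $|\langle k\rangle|=n/d$ with $d=\gcd(k,n)$, so there are exactly $d$ orbits, each of size $n/d$. On each orbit the edges $\{x,x+k\}$ form a closed chain, i.e.\ a copy of $C_{n/d}$ in the notation of Lemma~\ref{lem:numberwayscolorngon} (with the natural degenerate readings when $n/d=1$, a single looped vertex, or $n/d=2$, a single doubled edge). Hence $\Gamma_k$ is a disjoint union of $d$ copies of $C_{n/d}$, and since independent sets of a disjoint union are products of independent sets of the components, the number of subsets $S\subseteq\mathbb{Z}/n\mathbb{Z}$ with $k\notin S-S$ equals $P(n/d)^d=L(n/d)^d$. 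Dividing by the $2^n$ equally likely subsets gives $\mathbb{P}(k\notin S-S)=L(n/d)^d/2^n$. I would then sanity-check the two degenerate lengths directly: $n/d=1$ gives $L(1)^n/2^n=1/2^n=\mathbb{P}(S=\emptyset)$, matching the fact that $0\in S-S$ iff $S\neq\emptyset$; and $n/d=2$ gives $L(2)^d/2^n=(3/4)^d$, matching $d$ independent ``bad pair'' events of probability $1/4$ each. Finally, the $O$-bound comes from $L(m)=\phi^m+(-\phi)^{-m}\le\phi^m+1$: for bounded $d$ we get $L(n/d)^d\le(\phi^{n/d}+1)^d=\phi^n\,(1+\phi^{-n/d})^d=O(\phi^n)$, so $\mathbb{P}(k\notin S-S)=O\big((\phi/2)^n\big)$.

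The step requiring the most care is verifying that the components of $\Gamma_k$ really are the closed chains $C_{n/d}$ to which Lemma~\ref{lem:numberwayscolorngon} applies, in particular handling the degenerate cases $n/d\in\{1,2\}$ so that $L(n/d)$ is still the correct count (a self-loop simply forbids choosing that vertex; a doubled edge still just forbids choosing both endpoints). Everything else — the orbit count $\gcd(k,n)$, the independence across components, and the elementary estimate on $L(m)$ — is routine bookkeeping.
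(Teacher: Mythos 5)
Your argument is essentially the paper's own proof: both decompose $\mathbb{Z}/n\mathbb{Z}$ into the $d$ cosets of $\langle k\rangle$, note that on each coset the pairs $\{x,x+k\}$ link up into a closed chain of length $n/d$, and then invoke Lemma \ref{lem:numberwayscolorngon} together with independence across cosets to get $L(n/d)^d/2^n$; your Cayley-graph/independent-set phrasing and the checks at $n/d\in\{1,2\}$ are just a more careful packaging of the same idea. One caveat: your justification of the $O\left((\phi/2)^n\right)$ estimate covers only bounded $d$, and indeed the estimate fails when $n/d=2$ (i.e., $k=n/2$ with $n$ even), where the probability is $(3/4)^{n/2}=(\sqrt{3}/2)^n$, which is not $O\left((\phi/2)^n\right)$ --- but that is a defect of the lemma's stated asymptotic (which the paper's proof never addresses), not of your derivation of the exact formula.
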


\begin{proof}
Let $k\in \mathbb{Z}/n\mathbb{Z}$. Since the order of $k$ in $\mathbb{Z}/n\mathbb{Z}$ is equal to $n/\gcd(n,k)$, if we have a set $\{x_1,x_2,\dots, x_m\}$ such that $x_1-x_2=x_2-x_3=\dots=x_m-x_1=k$ then $m=n/\gcd(n,k)$. These sets partition the group and thus, the number of subsets of $\mathbb{Z}/n\mathbb{Z}$ that satisfy this property is $\gcd(n,k)$. Combining the fact that these sets have a pairwise trivial intersection with Lemma \ref{lem:numberwayscolorngon} we have
\begin{equation}  \mathbb{P}(k\notin S - S )\ = \ \frac{L(n/d)^d}{2^n}, \end{equation}
as desired. \hfill $\Box$
\end{proof}

\begin{lem}\label{lem:S1minusS2} Let $S_1$ and $S_2$ be independent sets chosen uniformly among subsets of $\mathbb{Z}/n\mathbb{Z}$. Then
\begin{equation}
 \mathbb{P}( k\notin S_1 - S_2 )\ = \ \left(\frac{3}{4}\right)^n.
 \end{equation}
\end{lem}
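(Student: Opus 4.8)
The plan is to follow the proof of Lemma~\ref{lem:s1pluss2} essentially verbatim, since the two-set setting eliminates the complication that forced the Lucas-number bookkeeping in Lemma~\ref{lem:SminusS}. First I would translate the event $k \in S_1 - S_2$ into a statement about pairs: because $a - b = k$ is the same as $a = b + k$, for each $b \in \mathbb{Z}/n\mathbb{Z}$ there is exactly one pair $(b+k,\, b)$ realizing $k$ as a difference, and $k \in S_1 - S_2$ holds if and only if $b+k \in S_1$ and $b \in S_2$ for at least one $b$. Complementing, this gives
\[
\mathbb{P}(k \notin S_1 - S_2) \ = \ \prod_{b \in \mathbb{Z}/n\mathbb{Z}} \mathbb{P}\big(b+k \notin S_1 \ \vee \ b \notin S_2\big).
\]

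The one step requiring justification is that these $n$ events are mutually independent. For distinct $b, b'$ we have $b+k \neq b'+k$, so the indicators of $b+k \in S_1$ and $b'+k \in S_1$ are independent (they concern distinct elements of the uniform random set $S_1$); similarly $b \in S_2$ and $b' \in S_2$ are independent; and every event about $S_1$ is independent of every event about $S_2$ by hypothesis. Hence the displayed product is legitimate, and since each factor equals $1 - \mathbb{P}(b+k \in S_1)\,\mathbb{P}(b \in S_2) = 1 - 1/4 = 3/4$, we conclude $\mathbb{P}(k \notin S_1 - S_2) = (3/4)^n$.

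I do not expect any real obstacle. Unlike Lemma~\ref{lem:SminusS}, where a single $x$ could occur in several difference-pairs for the same $k$ and chains of length $n/\gcd(n,k)$ appeared, here the map $b \mapsto (b+k,\, b)$ splits the relevant pairs into $n$ singletons, so no chains form and no Lucas numbers intervene. The only point to watch is the apparent degeneracy when $k = 0$ (so that $b + k = b$): even then the two coordinates lie in the distinct independent sets $S_1$ and $S_2$, so the independence argument and the value $3/4$ per factor are unchanged.
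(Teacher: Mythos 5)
Your proposal is correct and follows essentially the same route as the paper: both factor the event over the $n$ pairs $(x, x-k)$ (your $(b+k,b)$), use the mutual independence coming from the fact that the two coordinates live in the independent sets $S_1$ and $S_2$, and compute each factor as $1 - \tfrac14 = \tfrac34$. Your explicit remark about the $k=0$ case and the contrast with the chain structure in Lemma~\ref{lem:SminusS} is a sensible addition but does not change the argument.
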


\begin{proof} The proof follows immediately from the following equalities:
\begin{eqnarray}
\mathbb{P}(k\notin S_1-S_2)&\ = \ & \prod_{x \in \mathbb{Z}/n\mathbb{Z}}\mathbb{P}(x\notin S_1 \cup x-k \notin S_2)\nonumber\\
&\ = \ & \prod_{x\in \mathbb{Z}/n\mathbb{Z}}(1-\mathbb{P}(x\in S_1 \cap x-k \notin S_2))\nonumber\\
&\ = \ & \prod_{x\in \mathbb{Z}/n\mathbb{Z}}(1-\mathbb{P}(x\in S_1)\mathbb{P}( x-k \notin S_2))\nonumber\\
&\ = \ & \left(\frac{3}{4}\right)^n.
\end{eqnarray} \ \\ \hfill $\Box$
\end{proof}

\begin{prop} Let $S$ be uniformly chosen random subsets of $\mathbb{Z}/n\mathbb{Z}$ then as $n$ approaches infinity  $\mathbb{P}(|S+S|=|S-S|=n)$ approaches 1.
\end{prop}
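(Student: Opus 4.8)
The plan is to show that both $\mathbb{P}(|S+S| < n) \to 0$ and $\mathbb{P}(|S-S| < n) \to 0$, so that by the union bound $\mathbb{P}(|S+S| = |S-S| = n) \to 1$. This is really just a concrete instantiation of the argument in the proof of Theorem \ref{thm:unifchosenrandG1}, specialized to $G = \mathbb{Z}/n\mathbb{Z}$, but now we have the sharper per-element estimates from Lemmas \ref{lem:spluss} and \ref{lem:SminusS} available, so the bookkeeping is cleaner.

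First I would handle the sumset. By Lemma \ref{lem:spluss}, for each fixed $k \in \mathbb{Z}/n\mathbb{Z}$ we have $\mathbb{P}(k \notin S+S) = O((3/4)^{n/2})$, uniformly in $k$ (the three cases in that lemma are all bounded by a constant times $(3/4)^{n/2}$, or equivalently $((\sqrt{3}/2)^n$). Then the union bound over the $n$ elements gives
\begin{equation}
\mathbb{P}(|S+S| < n) \ = \ \mathbb{P}\Big(\bigcup_{k \in \mathbb{Z}/n\mathbb{Z}} \{k \notin S+S\}\Big) \ \le \ \sum_{k \in \mathbb{Z}/n\mathbb{Z}} \mathbb{P}(k \notin S+S) \ = \ O\big(n \, (3/4)^{n/2}\big),
\end{equation}
which tends to $0$ as $n \to \infty$ since the exponential decay beats the linear factor.

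Next I would do the same for the difference set using Lemma \ref{lem:SminusS}: for each $k$, $\mathbb{P}(k \notin S - S) = L(n/d)^d / 2^n = O((\phi/2)^n)$ where $d = \gcd(k,n)$, and again this bound is uniform in $k$ (the worst case, $d$ fixed and small, still gives $(\phi/2)^n$ up to constants, since $L(m)^d/2^{md} \le (1.8^m/2^m)^d = (0.9)^{md}$ and more carefully $L(m) \le \phi^m + 1$). Hence the union bound gives $\mathbb{P}(|S-S| < n) = O(n (\phi/2)^n) \to 0$. Combining, $\mathbb{P}(|S+S| = |S-S| = n) \ge 1 - \mathbb{P}(|S+S| < n) - \mathbb{P}(|S-S| < n) \to 1$, and on the complementary event of probability tending to $1$ we certainly have $|S+S| = |S-S|$.

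The only mild subtlety — and the step I'd be most careful about — is making sure the per-element bounds are genuinely uniform in $k$ so that multiplying by $n$ is legitimate; this is where one must check that the $\gcd(k,n) = d$ case in Lemma \ref{lem:SminusS} does not blow up for the worst choice of $d$. Since $L(n/d)^d \le (\phi^{n/d} + 1)^d$ and this is maximized in a controlled way (the product $\prod L(|X_i|)$ over a partition of an $n$-set is largest when the pieces are as small as possible, but even in the extreme each factor contributes at most $L(1) = 1$ or is bounded by $\phi^{|X_i|}$ up to a bounded multiplicative loss), one gets $L(n/d)^d/2^n \le C\,(\phi/2)^n$ for an absolute constant $C$. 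After that observation the rest is just the union bound, which is exactly the mechanism already used to prove Theorem \ref{thm:unifchosenrandG1}; in fact this proposition is an immediate corollary of that theorem, and the point of restating it here is to record the quantitative rates $(3/4)^{n/2}$ and $(\phi/2)^n$ that the cyclic-group lemmas supply.
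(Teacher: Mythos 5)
Your proposal is correct and is essentially identical to the paper's argument: the paper's proof is the one-line statement that the result is ``immediate from the union bound and Lemmas \ref{lem:spluss} through \ref{lem:S1minusS2},'' and you have simply written out that union bound explicitly, including the (worthwhile) check that the per-element bounds are uniform in $k$. The only tiny quibble is that for the difference set the uniform rate over all $k\neq 0$ is really governed by the worst case $d=n/2$, giving $3^{n/2}/2^n=(3/4)^{n/2}$ rather than $(\phi/2)^n$, but your fallback bound $L(n/d)^d/2^n\le (0.9)^n$ already covers this and the union bound goes through unchanged.
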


\begin{proof}
This is immediate from the union bound and Lemmas \ref{lem:spluss} through \ref{lem:S1minusS2}. \hfill $\Box$
\end{proof}

\subsection{Dihedral Group Case}\label{sec:dih}

Let $S$ be a subset of $D_{2n}=\langle a,b | a^n , b^2, abab\rangle$ chosen uniformly at random. We first give a proof for the dihedral group subcase of Theorem \ref{thm:unifchosenrandG1} by using the previous lemmas. Before we do so we need two results. The first looks at the probability of a rotation element ($k=a^i$) not being in the sumset. The second looks at the probability of a reflection element $(k=a^ib)$ not being in the sumset. We denote the set of all rotation elements by $R$ and the set of all reflection elements by $F$.

\begin{lem}\label{lem:rotate}
Let $S$ be a uniformly chosen random subset of $D_{2n}$ and let $k\in D_{2n}$ such that $k=a^i$. Then $\mathbb{P}(k\notin S+S)\leq (3/4)^{n/2}(\phi/2)^n$ and $\mathbb{P}(k\notin S-S)\leq (\phi/2)^{2n}$.
\end{lem}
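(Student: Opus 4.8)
The plan is to decompose, for a fixed rotation element $k = a^i$, the group $D_{2n}$ into the chains of pairs that sum (via the group operation) to $k$, and then count red/blue colorings using Lemma \ref{lem:numberwayscolorngon} exactly as in the proof of Theorem \ref{thm:unifchosenrandG1}. The key structural observation is that $a^i$ can be written as a product of two rotations, $a^j + a^{i-j}$, or as a product of two reflections, $(a^jb) + (a^{j-i}b) = a^jba^{j-i}b = a^j a^{-(j-i)} = a^i$ (using $ba^mb = a^{-m}$). So the pairs summing to $k$ split into pairs living entirely inside the rotation subgroup $R \cong \mathbb{Z}/n\mathbb{Z}$ and pairs living entirely inside the reflection coset $F$; the two parts do not interact. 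This means $\mathbb{P}(k \notin S+S)$ factors as a product of two independent quantities, one coming from $S \cap R$ and one from $S \cap F$.

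For the rotation part, the pairs $\{a^j, a^{i-j}\}$ summing to $a^i$ inside $R \cong \mathbb{Z}/n\mathbb{Z}$ are exactly the pairs summing to $i$ in $\mathbb{Z}/n\mathbb{Z}$, so by (the proof of) Lemma \ref{lem:spluss} the contribution is $O((3/4)^{n/2})$ — indeed it is bounded by $(3/4)^{\lfloor n/2\rfloor}$ up to the harmless constant factor from the $x+x=k$ elements, which is absorbed into the crude bound. For the reflection part, the condition $(a^jb)+(a^\ell b) = a^i$ becomes $j - \ell \equiv i \pmod n$, i.e.\ $a^jb$ and $a^{j-i}b$ must not both lie in $S$; identifying $F$ with $\mathbb{Z}/n\mathbb{Z}$ via $a^jb \leftrightarrow j$, this is precisely the "no two elements differing by $i$" condition analyzed in Lemma \ref{lem:SminusS}, whose associated chains are the cosets of $\langle i \rangle$ and whose count of valid colorings is $L(n/d)^d$ with $d = \gcd(n,i)$. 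Since $L(m) \le \phi^m$ for $m \ge 1$ (with $L(m)=\phi^m + (-\phi)^{-m}$), we get $L(n/d)^d \le \phi^n$ whenever $n/d \ge 1$ — which always holds — so this part contributes at most $(\phi/2)^n$. Multiplying the two bounds gives $\mathbb{P}(a^i \notin S+S) \le (3/4)^{n/2}(\phi/2)^n$, as claimed.

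For the difference-set bound, the argument is the same but now \emph{both} parts behave like the difference-set computation. A rotation $a^i$ arises as a difference $a^j - a^{j-i}$ of two rotations, and as a "difference" of two reflections: $(a^jb) - (a^\ell b) = (a^jb)(a^\ell b)^{-1} = (a^jb)(a^\ell b) = a^{j-\ell}$, so again we need $j - \ell \equiv i \pmod n$. Thus each of $S \cap R$ and $S \cap F$ contributes a factor of the form $L(n/d)^d/2^n = O((\phi/2)^n)$ by Lemma \ref{lem:SminusS} (and its proof), and multiplying yields $\mathbb{P}(a^i \notin S-S) \le (\phi/2)^{2n}$. The main obstacle — really the only point requiring care — is verifying that the rotation-pairs and reflection-pairs genuinely partition into disjoint chains that do not share vertices, i.e.\ that no single element of $D_{2n}$ appears in a sum/difference chain of both types for the same target $k$; this follows because $R$ and $F$ are disjoint and every relevant pair is homogeneous (both entries rotations, or both reflections), so $S \cap R$ and $S \cap F$ are independent and the chain structures are those already handled in Lemmas \ref{lem:spluss} and \ref{lem:SminusS}. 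Once that is in hand the bounds are immediate, and the looseness ($\phi$ versus the crude $1.8$, or the ignored $x+x=k$ factors) is irrelevant since we only need exponential decay.
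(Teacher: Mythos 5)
Your proof follows essentially the same route as the paper's: split the representations of $a^i$ into rotation--rotation pairs (a sumset problem in $\mathbb{Z}/n\mathbb{Z}$, handled by Lemma \ref{lem:spluss}) and reflection--reflection pairs (a difference-set problem, handled by Lemma \ref{lem:SminusS}), use the disjointness of $R$ and $F$ for independence, and multiply the two factors; the paper's proof is simply a terser version of exactly this argument. The only caveat --- one shared with the paper's own statements --- is that $L(m)\le\phi^m$ fails for even $m$ (e.g.\ $L(2)=3>\phi^2$), so the $(\phi/2)^n$ factors are only valid up to the kind of crude correction (replacing $\phi$ by $1.8$) that the paper itself flags at the end of Section \ref{sec:subsetsfinitegroups}; this does not affect the exponential decay you actually need.
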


\begin{proof}
An element of the form $a^i$ can be written as a product of two rotations,  $a^xa^y$ where $x+y=i$, or the product of two reflections, $a^xba^yb$ where $x-y=i$.  Since the set of rotations and the set of reflections can be viewed as cyclic groups the proofs follow immediately from Lemmas  \ref{lem:spluss} and  \ref{lem:SminusS}. \hfill $\Box$
\end{proof}

\begin{lem}\label{lem:flip}
Let $S$ be a uniformly chosen random subset of $D_{2n}$ and let $k\in D_{2n}$ such that $k=a^ib$. Then $\mathbb{P}(k\notin S+S)\leq (3/4)^n$ and $\mathbb{P}(k\notin S-S)\leq (3/4)^n$.
\end{lem}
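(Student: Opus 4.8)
The plan is to mimic the structure of the proof of Lemma \ref{lem:rotate}, but observe that a reflection element $a^i b$ decomposes in a fundamentally different (and more favorable) way than a rotation element. First I would write $k = a^i b$ as a product of two generators from the group. A reflection is the product of a rotation and a reflection: $a^i b = a^x (a^y b)$ with $x + y = i$, or $a^i b = (a^x b)(a^y)$ with... (working out the relation $(a^x b) a^y = a^{x-y} b$, so $x - y = i$). In either case exactly one factor is a rotation and one is a reflection, so each representation pairs an element of the rotation part of $S$ with an element of the reflection part of $S$. These two parts are disjoint subsets of $D_{2n}$, hence chosen independently, so the situation is precisely the ``$A+B$'' setup of Lemma \ref{lem:s1pluss2} (for the sumset) rather than the ``$A+A$'' setup of Lemma \ref{lem:spluss}. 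Concretely, letting $S_R = S \cap R$ and $S_F = S \cap F$, both uniform and independent subsets of a set of size $n$, the event $k \notin S+S$ requires $k$ to be missed among the pairs $(a^x, a^y b)$ with $x+y=i$, and by Lemma \ref{lem:s1pluss2} this probability is $(3/4)^n$.

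Second, for the difference set I would argue analogously. Writing $k = a^i b$ as a difference, note $a^i b = a^x - (\text{something})$ does not make sense directly, but $a^i b = g - h$ means $g = (a^i b) h$; running $h$ over rotations sends $g$ to reflections and vice versa, so again each representation of $k$ as a difference pairs one rotation of $S$ with one reflection of $S$. Thus the event $k \notin S - S$ is governed by Lemma \ref{lem:S1minusS2} with the independent pair $(S_R, S_F)$, giving probability $(3/4)^n$. In both cases the key point is that the ``$A \pm A$'' reduction that forced the weaker Lucas-number bound in Lemmas \ref{lem:spluss} and \ref{lem:SminusS} simply does not arise here, because a reflection is never a sum (or difference) of two rotations or of two reflections.

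The main obstacle — really the only thing requiring care — is the bookkeeping of exactly how many pairs $(g,h)$ in each decomposition are distinct versus coincident (the analogue of the ``$x+x=k$'' correction terms that produced the extra factors of $1/2$ in Lemma \ref{lem:spluss}), and checking that over the full set of $n$ reflection/rotation pairs the relevant one-in-each-pair events are genuinely mutually independent. Since a reflection has no square root among reflections or rotations in the relevant sense, there are no coincident pairs to worry about, and the $n$ constraints are indexed by $n$ disjoint pairs, so independence holds and one lands exactly on the clean bound $(3/4)^n$ from Lemmas \ref{lem:s1pluss2} and \ref{lem:S1minusS2}. I would then simply cite those two lemmas to conclude.
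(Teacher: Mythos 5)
Your proposal is correct and follows essentially the same route as the paper: the paper's proof is exactly the observation that $a^ib$ only arises as a (rotation, reflection) pair, so the independent-sets Lemma \ref{lem:s1pluss2} (and its difference analogue) applies to $S\cap R$ and $S\cap F$. You simply spell out the bookkeeping that the paper leaves implicit.
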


\begin{proof}
Since an element of the form $a^ib$  can be written as a product of a rotation and a reflection the proof follows immediately from Lemma \ref{lem:s1pluss2}. \hfill $\Box$
\end{proof}

\begin{thm}\label{thm:dih}
Let $S$ be a uniformly random subset of $D_{2n}$. Then, as $n$ approaches infinity, $\mathbb{P}(|S+S|=|S-S|)$ approaches 1.
\end{thm}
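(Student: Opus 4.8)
The plan is to follow the same union-bound strategy that proved Theorem~\ref{thm:unifchosenrandG1}, but now using the sharper, parity-sensitive estimates collected in Lemmas~\ref{lem:rotate} and~\ref{lem:flip} for the two types of elements of $D_{2n}$. Since $D_{2n}$ has exactly $n$ rotation elements $a^i \in R$ and $n$ reflection elements $a^ib \in F$, the event $\{|S+S| < 2n\}$ is the union over these $2n$ elements of the events $\{k \notin S+S\}$, and similarly for $S-S$. It therefore suffices to show each of $\mathbb{P}(|S+S| < 2n)$ and $\mathbb{P}(|S-S| < 2n)$ tends to $0$; then $\mathbb{P}(S+S = S-S = D_{2n}) \to 1$, which in particular forces $|S+S| = |S-S|$.

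First I would handle the sumset. By the union bound,
\begin{equation}
\mathbb{P}(|S+S| < 2n) \ \le \ \sum_{k \in R} \mathbb{P}(k \notin S+S) \ + \ \sum_{k \in F} \mathbb{P}(k \notin S+S).
\end{equation}
For the rotation elements, Lemma~\ref{lem:rotate} gives $\mathbb{P}(k \notin S+S) \le (3/4)^{n/2}(\phi/2)^n$, so their total contribution is at most $n (3/4)^{n/2}(\phi/2)^n$; since $\phi/2 < 1$ and $3/4 < 1$, this decays (super-)exponentially. For the reflection elements, Lemma~\ref{lem:flip} gives $\mathbb{P}(k \notin S+S) \le (3/4)^n$, contributing at most $n(3/4)^n \to 0$. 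Hence $\mathbb{P}(|S+S| < 2n) \le n(3/4)^{n/2}(\phi/2)^n + n(3/4)^n \to 0$ as $n \to \infty$. The difference set is handled identically, using the bounds $\mathbb{P}(k \notin S-S) \le (\phi/2)^{2n}$ for $k \in R$ and $\mathbb{P}(k \notin S-S) \le (3/4)^n$ for $k \in F$ from the same two lemmas, giving $\mathbb{P}(|S-S| < 2n) \le n(\phi/2)^{2n} + n(3/4)^n \to 0$.

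Combining the two estimates with one more union bound,
\begin{equation}
\mathbb{P}\big(S+S = S-S = D_{2n}\big) \ \ge \ 1 - \mathbb{P}(|S+S| < 2n) - \mathbb{P}(|S-S| < 2n) \ \longrightarrow \ 1,
\end{equation}
and on the event in question we trivially have $|S+S| = 2n = |S-S|$, so $\mathbb{P}(|S+S| = |S-S|) \to 1$ as claimed. I do not expect any real obstacle here: the theorem is essentially a corollary of Lemmas~\ref{lem:rotate} and~\ref{lem:flip} together with the union bound, and the only thing to be careful about is bookkeeping the counts (exactly $n$ elements of each type) and noting that all the bases appearing — $3/4$, $\phi/2$, $(\phi/2)^2$ — are strictly less than $1$, so that multiplying by the polynomial factor $n$ still yields a null sequence. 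The mild subtlety worth a sentence in the write-up is simply that this is the dihedral specialization already promised before Lemma~\ref{lem:rotate}, now made rigorous.
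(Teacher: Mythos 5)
Your proposal is correct and follows exactly the paper's own route: the paper proves Theorem~\ref{thm:dih} by applying the union bound to Lemmas~\ref{lem:rotate} and~\ref{lem:flip}, which is precisely what you do, merely with the bookkeeping (the $n$ rotations and $n$ reflections, and the decay of each bound) written out explicitly. No gaps; your write-up is a more detailed version of the same one-line argument.
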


\begin{proof}
The proof follows immediately from applying the union bound to Lemmas \ref{lem:rotate} and \ref{lem:flip}. \hfill $\Box$
\end{proof}

Note that by Theorem \ref{thm:unifchosenrandG1} we know that the percentage of sum-dominant and difference-dominant sets goes to zero at an exponential rate. However, if we look at any fixed $D_{2n}$ we conjecture that the number of sum-dominant subsets is greater than the number of difference-dominant subsets. For the first few dihedral groups (up to $D_{16}$) Figure \ref{fig:charts} shows an exhaustive comparison of the subsets of $D_{2n}$. Figure \ref{fig:charts} also includes a sample statistic for larger dihedral groups. Note that it is hard to continue a complete enumeration.

\begin{figure}[h]
\begin{center}
\includegraphics[width=5.5cm]{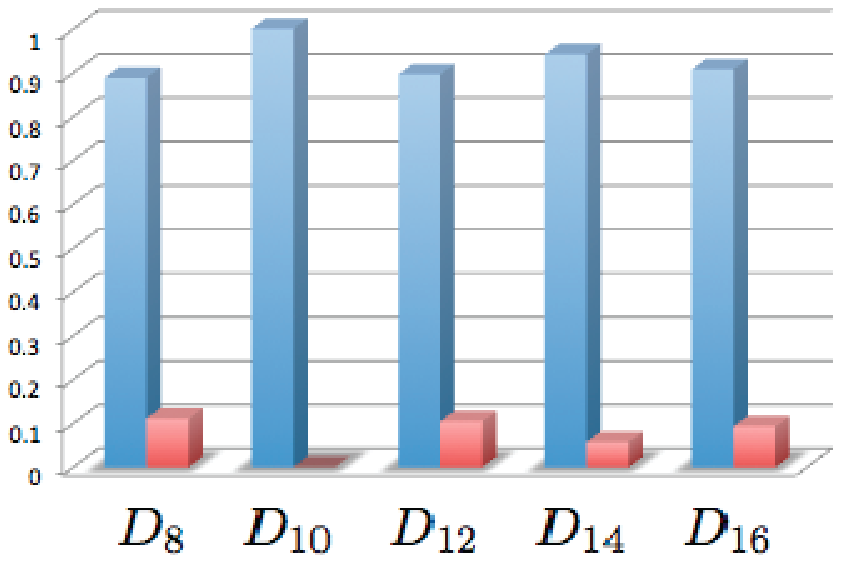} \includegraphics[width=5.5cm]{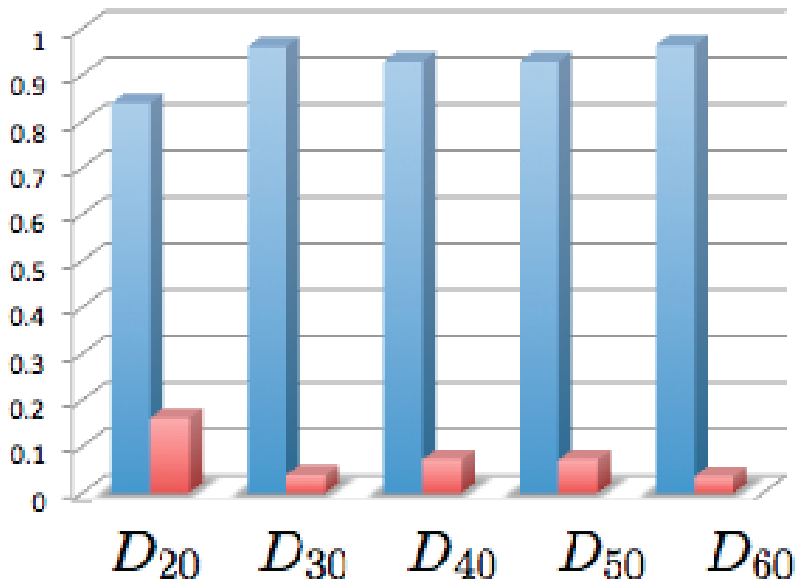}
\caption{\label{fig:charts} Relative number of sum-dominant sets (larger values) versus difference-dominant sets (lower values) in dihedral groups.}
\end{center}\end{figure}

As Figure \ref{fig:charts} suggests, sum-dominant sets are more likely to appear than difference-dominant sets. Let $S=R\cup F$ where $R$ is the set of rotations in $S$ and $F$ is the set of reflections in $S$. From Table \ref{table:setrotationflip} we note that the difference in what contributes to the sumsets and difference sets is $R-R$ which contributes to the difference set and $F-R$ and $R+R$ which contributes to the sumset. It is due to this that there are more sum-dominant sets than difference-dominant sets.

\begin{table}[h]
\begin{center}
  \begin{tabular}{| c | c | c  |}
    \hline
    \textbf{\texttt{Set}} & \textbf{\texttt{Rotations in the Set}} & \textbf{\texttt{Reflections in the Set}} \\ \hline
    S & $R$ & $F$ \\ \hline
    S+S & $R+R$, $F+F$ & $R+F$, $-R+F$ \\ \hline
    S-S & $R-R$, $F+F$ & $R+F$ \\
    \hline
  \end{tabular}\\
\caption{\label{table:setrotationflip} How elements contribute to the size of $S+S$ versus $S-S$.}
\end{center}
\end{table}


\section{Conclusion}\label{sec:conclusion}

We have shown that finite groups behave differently than the integers in the sense that almost all subsets are balanced. The reason is that finite groups do not have a fringe. As a result, in finite groups almost all sumsets and difference sets are equal to the entire group. The dihedral group case also hints at the importance of the size of the commutator subgroup and the number of order two elements. It is easy to see that the size of the sumset is greater when the commutator subgroup is small while the size of the difference set is lower due to the greater amount of order two elements.

A natural question to ask is what would happen if we no longer weight each subset equally. When each subset is chosen with uniform probability then the probability of the subset being balanced is equal to $1$; however, in $\mathbb{Z}/n\mathbb{Z}$, if we take subsets of the first half of the group (i.e., $\bar 0,\bar1, \dots,\bar \lfloor \frac{n}{2} \rfloor$) then the sumsets and difference sets behave like they would in $\mathbb{Z}$. Thus, the percentage of balanced groups is closer to $0$. It would be interesting to explore where the phase transition occurs.

Another question to ask is what happens when we look at non-abelian infinite groups. One difficulty is how we approach subsets of infinite groups. For example, if we look at $(\mathbb{Z}/2\mathbb{Z})^{|\mathbb{N}|}$ we have two different ways to limit the size of the subset. One possibility is to require $S$ to be a subset of a finite subgroup. This would allow for an easier computation of the limiting behavior, though we would have to determine the probability it lives in each finite subgroup.




\bigskip


\begin{thebibliography}{999991}


\bibitem[BQ]{BQ}
\newblock A. T. Benjamin and J. J. Quinn, \emph{Proofs that Really Count: The Art of Combinatorial Proof}, the Mathematical Association of America,  Washington, 2003.

\bibitem [FP]{FP}
\newblock G. A. Freiman and V. P. Pigarev, \emph{The relation between the invariants R and T}, Number theoretic studins in the Markov Spectrum and in the structural theory of set addition (Russian), Kalinin GOs. Univ., Moscow, 1973, 172-174.

\bibitem[GO]{GO}
\newblock G. Martin and K. O'Bryant, \emph{Many sets have more sums than differences},  CRM Proceedings \& Lecture Notes \textbf{43}, American Mathematical Society (Providence, RI, 2007), 287--305.

\bibitem[He]{He}
P. V. Hegarty, \emph{Some explicit constructions of sets with more sums than differences} (2007), Acta Arithmetica \textbf{130} (2007), no. 1, 61--77.

\bibitem[ILMZ]{RSZ}
\newblock G. Iyer, O. Lazarev, S. J. Miller and L. Zhang, \emph{Finding and Counting MSTD sets} (2011), the conference proceedings of the 2011 Combinatorial and Additive Number Theory Conference.

\bibitem [LMO]{LMO}
\newblock O. Lazarev, S. J. Miller and K. O'Bryant, \emph{Distribution of Missing Sums in Sumsets} (2013), Experimental Mathematics \textbf{22} (2013), no. 2, 132--156.

\bibitem [Ma]{Ma}
\newblock J. Marica, \emph{On a conjecture of Conway}, Canad. Math. Bull. 12 (1969), 233--234.

\bibitem[MO]{MO}
G. Martin and K. O'Bryant, \emph{Many sets have more sums than differences}, Additive combinatorics, 287--305, CRM Proc. Lecture Notes \textbf{43}, Amer. Math. Soc., Providence, RI, 2007.

\bibitem[MOS]{MOS}
\newblock S. J. Miller, B. Orosz and D. Scheinerman, \emph{Explicit constructions of infinite families of MSTD sets}, Journal of Number Theory \textbf{130} (2010), 1221--1233.


\bibitem[Na1]{Na1}
M. B. Nathanson, \emph{Sets with more sums than differences}, Integers : Electronic Journal of Combinatorial Number Theory \textbf{7} (2007), Paper A5 (24pp).

\bibitem[Na2]{Na2}
M. B. Nathanson, \emph{Problems in additive number theory. I}, Additive combinatorics, CRM Proc. Lecture Notes, vol. 43, Amer. Math. Soc., Providence, RI, 2007, pp. 263--270

\bibitem[Ru1]{Ru1}
I. Z. Ruzsa, \emph{On the cardinality of $A + A$ and $A - A$}, Combinatorics year (Keszthely, 1976), vol. 18, Coll. Math. Soc. J. Bolyai, North-Holland-Bolyai T$\grave{{\rm a}}$rsulat, 1978, 933--938.

\bibitem[Ru2]{Ru2}
I. Z. Ruzsa, \emph{Sets of sums and differences}, S$\acute{{\rm e}}$minaire de Th$\acute{{\rm e}}$orie des Nombres de Paris 1982-1983 (Boston), Birkhauser, 1984, 267--273.

\bibitem[Ru3]{Ru3}
I. Z. Ruzsa, \emph{On the number of sums and differences}, Acta Math. Sci. Hungar. \textbf{59} (1992), 439--447.

\bibitem[Zh1]{Zh1}
\newblock Y. Zhao, \emph{Counting MSTD Sets in Finite Abelian Groups}, J. Number Theory 130 (2010), 2308-2322.

\bibitem[Zh2]{Zh2}
Y. Zhao, \emph{Sets Characterized by Missing Sums and Differences}, Journal of Number Theory \textbf{131} (2011), 2107--2134.


\end{thebibliography}
\end{document}